\newtheorem{thm}{\textbf{Theorem}}[section]
\newtheorem{prop}[thm]{\textbf{Proposition}}
\newtheorem{cor}[thm]{\textbf{Corollary}}
\theoremstyle{definition}
\newtheorem{defn}[thm]{{\rm Definition}}
\newcommand{\olapla}{\overline\bigtriangleup}
\newcommand{\onabla}{\overline\nabla}
\newcommand{\p}{\phi}
\title{k-harmonic curves into a Riemannian manifold with constant sectional curvature}
\author{Shun Maeta}
\curraddr{Nakakuki 3-10-9 Oyama-shi Tochigi
Japan}
\email{shun.maeta@gmail.com}
\subjclass[2000]{primary 58E20, secondary 53C43}
\begin{document}




\begin{abstract}
In \cite{jell1}, J.Eells and L. Lemaire introduced $k$-harmonic maps,
 and T. Ichiyama, J. Inoguchi and H.Urakawa \cite{tijihu1} showed the first variation formula.
 In this paper,we describe the ordinary differential equations of $3$-harmonic curves into a Riemannian manifold with 
constant sectional curvature, 
 and show biharmonic curve is k-harmonic curve $(k\geq 2)$. 
\end{abstract}

\maketitle


\vspace{10pt}
\begin{flushleft}
{\large {\bf Introduction}}
\end{flushleft}
Theory of harmonic maps has been applied into various fields in differential geometry.
 The harmonic maps between two Riemannian manifolds are
 critical maps of the energy functional $E(\p)=\frac{1}{2}\int_M\|d\p\|^2v_g$, for smooth maps $\p:M\rightarrow N$.
 
On the other hand, in 1981, J. Eells and L. Lemaire \cite{jell1} proposed the problem to consider the {\em $k$-harmonic maps}:
 they are critical maps of the functional 
 \begin{align*}
 E_{k}(\p)=\int_Me_k(\p)v_g,\ \ (k=1,2,\dotsm),
 \end{align*}
 where $e_k(\p)=\frac{1}{2}\|(d+d^*)^k\p\|^2$ for smooth maps $\p:M\rightarrow N$.
G.Y. Jiang \cite{jg1} studied the first and second variation formulas of the bi-energy $E_2$, 
and critical maps of $E_2$ are called {\em biharmonic maps}. There have been extensive studies on biharmonic maps.
 
 Recently, in 2009, T. Ichiyama, J. Inoguchi and H. Urakawa \cite{tijihu1} studied the first variation formula of the 
$k$-energy $E_k$,
 whose critical maps are called $k$-harmonic maps.
 Harmonic maps are always $k$-harmonic maps by definition.
 In this paper, we study $k$-harmonic curves. 
 
In $\S \ref{preliminaries}$, we introduce notation and fundamental formulas of the tension field. 

In $\S \ref{k-harmonic}$, we show biharmonic curves into a Riemannian manifold with constant sectional curvature 
is always $k$-harmonic curves.


\section{Preliminaries}\label{preliminaries}
Let $(M,g)$ be an $m$ dimensional Riemannian manifold,
 $(N,h)$ an $n$ dimensional one,
 and $\p:M\rightarrow N$, a smooth map.
 We use the following notation.
 The second fundamental form $B(\p)$
 of $\p$ is a covariant differentiation $\widetilde\nabla d\p$ of $1$-form $d\p$,
 which is a section of $\odot ^2T^*M\otimes \p^{-1}TN$.
For every $X,Y\in \Gamma (TM)$, let
 \begin{equation}
 \begin{split}
 B(X,Y)
&=(\widetilde\nabla d\p)(X,Y)=(\widetilde\nabla_X d\p)(Y)\\
&=\overline\nabla_Xd\p(Y)-d\p(\nabla_X Y)=\nabla^N_{d\p(X)}d\p(Y)-d\p(\nabla_XY). 
 \end{split}
 \end{equation}
 Here, $\nabla, \nabla^N, \overline \nabla, \widetilde \nabla$ are the induced connections on the bundles $TM$,
 $TN$, $\p^{-1}TN$ and $T^*M\otimes \p^{-1}TN$, respectively.
 
 If $M$ is compact,
 we consider critical maps of the energy functional
 \begin{align}
 E(\p)=\int_M e(\p) v_g,
 \end{align}
where $e(\p)=\frac{1}{2}\|d\p\|^2=\sum^m_{i=1}\frac{1}{2}\langle d\p(e_i),d\p(e_i)\rangle$
 which is called the {\em enegy density} of $\p$, and the inner product 
 $\langle \cdot ,\cdot \rangle$ is a Riemannian metric $h$. 
 The {\em tension \ field} $\tau(\p)$ of $\p$ is defined by
 \begin{align}
 \tau(\p)=\sum^{m}_{i=1}(\widetilde \nabla d\p)(e_i,e_i)=\sum^m_{i=1}(\widetilde \nabla _{e_i}d\p)(e_i).
 \end{align}
 Then, $\p$ is a {\em harmonic map} if $\tau(\p)=0$.
 
 
 And we define
\begin{align}
\olapla
=\onabla^* \onabla
=-\sum^m_{k=1}(\onabla_{e_k}\onabla_{e_k}
-\onabla_{\nabla_{e_k}e_k}), 
\end{align}
 is the {\em rough Laplacian}.

 And we define $\mathscr{R}$ as follows :
\begin{align}
\mathscr{R}(V):=\sum^m_{i=1}R^N(V,d\p(e_i))d\p(e_i),\ \ \ V\in \Gamma(\p^{-1}TN),
\end{align}
where, 
\begin{align*}
R^N(U,V)=\nabla^N_{U}\nabla^N_{V}-\nabla^N_{V}\nabla^N_{U}-\nabla^N_{[U,V]},\ \ \ \ \ U,V\in \Gamma(TN),
\end{align*}
 is the curvature tensor of $(N,h)$.

\section{$k$-harmonic curves into a Riemannian manifold with constant sectional curvature}\label{k-harmonic}

In this section, we consider curves into a Riemannian manifold with constant sectional curvature. 
Then, we show the necessary and sufficient condition of 3-harmonic curve,
 and biharmonic curve is k-harmonic curve. 

\vspace{5pt}

 J. Eells and L. Lemaire \cite{jell1} proposed the notation of $k$-harmonic maps. 
The Euler-Lagrange equations for the $k$-harmonic maps was shown by
T. Ichiyama, J. Inoguchi and H. Urakawa \cite{tijihu1}.
We first recall it briefly.

\begin{thm}[\cite{tijihu1}]\label{kharmonic}
Let $k=2,3,\dotsm.$ Then, we have
\begin{align*}
\left.\frac{d}{dt}\right|_{t=0}E_k(\phi_t)
=-\int_M\langle \tau_k(\phi),V \rangle v_g,
\end{align*}
where
\begin{align*}
\tau _k(\phi)
:=J\left(\overline \triangle ^{(k-2)}\tau (\phi )\right)
=\overline \triangle \left (\overline \triangle ^{(k-2)}\tau (\phi )\right)
-\mathscr{R} \left( \overline \triangle ^{(k-2)}\tau (\phi ) \right),
\end{align*}
and
\begin{align*}
\overline \triangle ^{(k-2)}\tau (\phi )
=\underbrace{\overline \triangle \dotsm \overline \triangle}_{k-2} 
\tau(\phi).
\end{align*}
\end{thm}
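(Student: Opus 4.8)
The plan is to compute the first variation of $E_k$ directly and then reorganise it by repeated integration by parts, using the self-adjointness of the rough Laplacian $\overline\triangle$ and of the Jacobi-type operator $J(W):=\overline\triangle W-\mathscr{R}(W)$. Two structural facts drive the whole argument, and I would establish them first. The first is a normalisation of the energy: by writing $(d+d^*)^k\phi$ in terms of the tension field one checks that $(d+d^*)^{2s}\phi$ equals $\overline\triangle^{s-1}\tau(\phi)$ up to sign, while $(d+d^*)^{2s+1}\phi$ equals $\overline\nabla\,\overline\triangle^{s-1}\tau(\phi)$. In the odd case integrating by parts converts the extra factor $\overline\nabla^*\overline\nabla$ into one more $\overline\triangle$, and self-adjointness of $\overline\triangle$ then lets one slide all the Laplacians onto a single factor. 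In both parities this yields the uniform expression
\[
E_k(\phi)=\frac12\int_M\langle \overline\triangle^{(k-2)}\tau(\phi),\tau(\phi)\rangle\,v_g,
\]
which already explains why $\tau_k$ must involve $\overline\triangle^{(k-2)}\tau$.

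The second fact is the variation of the tension field. For a variation $\phi_t$ with $\Phi(x,t)=\phi_t(x)$ and variation field $V=d\Phi(\partial_t)$, commuting $\overline\nabla_{\partial_t}$ past each $\overline\nabla_{e_i}$ produces the curvature terms $R^N(V,d\phi(e_i))d\phi(e_i)$, i.e. $\mathscr{R}(V)$, while the symmetry $\overline\nabla_{\partial_t}d\phi(e_i)=\overline\nabla_{e_i}V$ turns the remaining second derivatives into $-\overline\triangle V$. This gives
\[
\overline\nabla_{\partial_t}\tau(\phi_t)|_{t=0}=-\overline\triangle V+\mathscr{R}(V)=-J(V),
\]
and one notes that $J$ is formally self-adjoint, since both $\overline\triangle=\overline\nabla^*\overline\nabla$ and $\mathscr{R}$ are self-adjoint by the curvature symmetries of $R^N$.

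With these in hand I would differentiate $E_k(\phi_t)=\tfrac12\int\langle\overline\triangle_t^{(k-2)}\tau_t,\tau_t\rangle\,v_g$ at $t=0$. Differentiating the exposed factor $\tau_t$ and invoking the variation formula contributes, after moving the $(k-2)$ Laplacians across by self-adjointness of $\overline\triangle$ and then using self-adjointness of $J$, exactly the term $-\int_M\langle J(\overline\triangle^{(k-2)}\tau),V\rangle\,v_g=-\int_M\langle\tau_k(\phi),V\rangle\,v_g$. The scheme of the proof is then an induction on $k$: one peels off a single factor $\overline\triangle_t$ from $\overline\triangle_t^{(k-2)}$, integrates by parts, and reduces to the previous case, the base case $k=2$ being Jiang's bitension field $\tau_2=J(\tau)$, where no Laplacian needs to be differentiated and the computation is immediate.

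The main obstacle is precisely the step hidden in that induction: the rough Laplacian $\overline\triangle_t$ itself depends on $t$ through $\phi_t$, so $\overline\nabla_{\partial_t}$ does not commute with $\overline\triangle_t$, and each commutator $[\overline\nabla_{\partial_t},\overline\triangle_t]$ generates curvature expressions of the schematic type $\sum_i R^N(V,d\phi(e_i))(\cdot)$ together with their covariant derivatives. The delicate part is the bookkeeping that collects all of these against $V$ after the integrations by parts and shows that they reorganise into the stated operator $J(\overline\triangle^{(k-2)}\tau)$ — equivalently, that they are fully accounted for by the $\mathscr{R}$-contribution rather than leaving residual curvature corrections. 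Verifying this reorganisation is where essentially all the analytic work resides; by contrast the energy normalisation and the variation-of-$\tau$ formula in the first two paragraphs are routine once the sign conventions for $\overline\triangle$ and $J$ are fixed.
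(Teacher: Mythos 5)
Your proposal cannot be compared against the paper's argument for a simple reason: the paper contains no proof of Theorem \ref{kharmonic}. It is quoted verbatim from Ichiyama--Inoguchi--Urakawa \cite{tijihu1} (``We first recall it briefly'') and used as a black box, so your attempt has to be judged entirely on its own merits.

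Judged that way, it has a genuine gap, located exactly where you flag it -- and the problem is worse than deferred bookkeeping, because the reorganisation you postulate is not available. Take $k=3$, the one higher case where your energy normalisation is exact: $(d+d^*)^3\phi=-\overline\nabla\tau(\phi)$, so $E_3(\phi)=\tfrac12\int_M\|\overline\nabla\tau(\phi)\|^2v_g$. Differentiating under the integral, using $\overline\nabla_{\partial_t}\overline\nabla_{e_i}\tau=\overline\nabla_{e_i}\overline\nabla_{\partial_t}\tau+R^N(V,d\phi(e_i))\tau$ and your (correct) formula $\overline\nabla_{\partial_t}\tau|_{t=0}=-\overline\triangle V+\mathscr{R}(V)$, the first summand does produce $-\int_M\langle J(\overline\triangle\tau),V\rangle v_g$ after integration by parts; but the commutator summand contributes, via the pair and skew symmetries of $R^N$, the additional term
\[
\sum_i\int_M\bigl\langle R^N\bigl(\overline\nabla_{e_i}\tau(\phi),\tau(\phi)\bigr)\,d\phi(e_i),\,V\bigr\rangle v_g,
\]
which is \emph{not} absorbed by $\mathscr{R}(\overline\triangle\tau)$ and does not vanish in general: even for a curve $\gamma$ into a space form of curvature $K$, with $\tau=\kappa_1e_2$ one computes $R^N(\overline\nabla_{\gamma'}\tau,\tau)\gamma'=K\kappa_1^3e_2$, nonzero whenever $K\neq0$ and $\kappa_1\neq0$. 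So an honest execution of your plan yields the Euler--Lagrange operator $\overline\triangle^2\tau-\mathscr{R}(\overline\triangle\tau)-\sum_iR^N(\overline\nabla_{e_i}\tau,\tau)d\phi(e_i)$ -- three terms, as in the later literature on polyharmonic maps -- rather than the two-term operator $J(\overline\triangle\tau)$ of the statement, and the induction you sketch only compounds such residuals for larger $k$. There is also a secondary, independent error in your first ``structural fact'': $(d+d^*)^{2s}\phi=\pm\overline\triangle^{s-1}\tau$ fails for $k\geq4$, because the exterior covariant differential does not square to zero on bundle-valued forms; for instance $(d+d^*)^4\phi=-\overline\triangle\tau-dd\tau$ with $(dd\tau)(X,Y)=R^N(d\phi(X),d\phi(Y))\tau$, so the $k$-energy itself differs from $\tfrac12\int_M\langle\overline\triangle^{(k-2)}\tau,\tau\rangle v_g$ by curvature terms. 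In short, the two facts you call routine are only half-right, and the step you describe as ``where essentially all the analytic work resides'' is precisely where the claimed identity breaks down; the proposal therefore does not prove the statement as written.
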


\vspace{10pt}
 As a corollary of this theorem, we have
\begin{cor}[\cite{tijihu1}]
$\phi :(M,g)\rightarrow (N,h)$ is a $k$-harmonic map if
\begin{equation}\label{propkharmonic}
\begin{split}
\tau _k(\phi)
:=J\left(\overline \triangle ^{(k-2)}\tau (\phi )\right)
=\overline \triangle \left (\overline \triangle ^{(k-2)}\tau (\phi )\right)
-\mathscr{R} \left( \overline \triangle ^{(k-2)}\tau (\phi ) \right)=0.
\end{split}
\end{equation}
\end{cor}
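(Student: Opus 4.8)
The plan is to read the result off directly from the first variation formula in Theorem~\ref{kharmonic}. By definition, a $k$-harmonic map is a critical point of the $k$-energy $E_k$; that is, $\phi$ is $k$-harmonic precisely when $\left.\frac{d}{dt}\right|_{t=0}E_k(\phi_t)=0$ holds for every smooth compactly supported variation $\{\phi_t\}$ with $\phi_0=\phi$, whose variation vector field I denote by $V=\left.\frac{\partial \phi_t}{\partial t}\right|_{t=0}\in\Gamma(\phi^{-1}TN)$. So the whole task reduces to relating the vanishing of this first variation to the vanishing of $\tau_k(\phi)$.

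First I would invoke Theorem~\ref{kharmonic}, which writes the first variation as $-\int_M\langle \tau_k(\phi),V\rangle v_g$. For the stated implication I would simply substitute the hypothesis $\tau_k(\phi)=0$: the integrand $\langle \tau_k(\phi),V\rangle$ then vanishes identically, so the first variation is zero for every admissible $V$, and hence $\phi$ is a critical point of $E_k$, i.e.\ $k$-harmonic. This direction is an immediate consequence of the theorem and requires no further computation, since $\tau_k(\phi)$ has already been identified as the Euler--Lagrange operator.

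For completeness I would also record the converse, which upgrades the implication to a genuine characterization. If $\phi$ is $k$-harmonic, then $\int_M\langle \tau_k(\phi),V\rangle v_g=0$ for all $V$, and by the fundamental lemma of the calculus of variations $\tau_k(\phi)=0$. The only point needing care here, and therefore the main obstacle, is the construction of admissible variations realizing a prescribed section: near any point $p$ with $\tau_k(\phi)(p)\neq 0$ one takes $V=\eta\,\tau_k(\phi)$ with $\eta\geq 0$ a bump function supported in a small neighborhood of $p$, and sets $\phi_t=\exp_{\phi}(tV)$. Then $\int_M\langle \tau_k(\phi),V\rangle v_g=\int_M\eta\,|\tau_k(\phi)|^2 v_g>0$, so the first variation is strictly negative, contradicting criticality. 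This localization argument is the only mildly nontrivial step, and it is entirely standard.
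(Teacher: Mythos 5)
Your proposal is correct and matches the paper's (implicit) argument: the corollary is stated as an immediate consequence of Theorem~\ref{kharmonic}, exactly as in your first two paragraphs, where $\tau_k(\phi)=0$ forces the first variation $-\int_M\langle\tau_k(\phi),V\rangle v_g$ to vanish for every variation field $V$. Your additional converse via the fundamental lemma of the calculus of variations (with the bump-function localization and the correct sign bookkeeping) is standard and sound, though the paper does not include it.
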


\vspace{10pt}

We say for a $k$-harmonic map to be {\em proper} if it is not harmonic. 

\pagebreak

Let us recall the definition of the Frenet frame.

\begin{defn}
The Frenet frame $\{e_i\}_{i=1,\dotsm n}$ associated to a curve 
$\gamma :I \in \mathbb{R}\rightarrow (N^n, \langle \cdot, \cdot \rangle)$,
 parametrized by arc length, is the orthonormalisation 
of the (n+1)-uple 
$\{\nabla^{N(k)}_{d\gamma(\frac{\partial}{\partial t})}d\gamma (\frac{\partial}{\partial t})\}_{k=1,\dotsm, n}$,
 described by
\begin{align*}
&e_1=d\gamma\left(\frac{\partial}{\partial t}\right),\\
&\nabla^N_{d\gamma(\frac{\partial}{\partial t})}e_1=\kappa_1 e_2,\\
&\nabla^N_{d\gamma(\frac{\partial}{\partial t})}e_i=-\kappa_{i-1}e_{i-1}+\kappa_i e_{i+1}\ \ (i=2,\dotsm, n-1),\\
&\nabla^N_{d\gamma(\frac{\partial}{\partial t})}e_n=-\kappa_{n-1} e_{n-1},\\
\end{align*}
where the functions ${\kappa_1, \kappa_2, \dotsm, \kappa_{n-1}}$ are called the curvatures of $\gamma$.
 Note that $e_1=\gamma'$
 is the unit tangent vector field along the curve. 
\end{defn}

\vspace{10pt}
First, we show the necessary and sufficient condition of k-harmonic curves 
into a Riemannian manifold with constant sectional curvature.

\begin{prop}\label{k-harmonic c s}
Let $\gamma:I\rightarrow (N^n,\langle \cdot, \cdot \rangle)$ be a smooth curve parametrized by arc length 
 from an open interval of $\mathbb{R}$ into a Riemannian manifold $(N^n,\langle \cdot, \cdot \rangle)$ with constant sectional curvature $K$.
 Then, $\gamma $ is $k$-harmonic if and only if,
\begin{align}
(\nabla^N_{\gamma'}\nabla^N_{\gamma'})^{k-1}\tau(\gamma)
-K\{(\nabla^N_{\gamma'}\nabla^N_{\gamma'})^{k-2}\tau(\gamma)
-\langle\gamma',(\nabla^N_{\gamma'}\nabla^N_{\gamma'})^{k-2}\tau(\gamma)\rangle\gamma'\}=0.
\end{align}
\end{prop}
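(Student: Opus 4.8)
The plan is to insert the one-dimensional, constant-curvature data directly into the $k$-harmonic equation \eqref{propkharmonic} of the Corollary,
\[
\overline{\triangle}\bigl(\overline{\triangle}^{(k-2)}\tau(\gamma)\bigr)-\mathscr{R}\bigl(\overline{\triangle}^{(k-2)}\tau(\gamma)\bigr)=0,
\]
and to simplify its two ingredients $\overline{\triangle}$ and $\mathscr{R}$ separately, using that the domain is an interval and the target has constant sectional curvature $K$.

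First I would compute the rough Laplacian along the curve. Since the domain $I\subset\mathbb{R}$ is one-dimensional and $\gamma$ is parametrized by arc length, the single frame field $e_1=\partial/\partial t$ satisfies $\nabla_{e_1}e_1=0$, so the correction term in the definition of $\overline{\triangle}$ drops out and $\overline{\triangle}=-\overline{\nabla}_{e_1}\overline{\nabla}_{e_1}$. Identifying the induced connection on $\gamma^{-1}TN$ via $\overline{\nabla}_{\partial/\partial t}=\nabla^N_{d\gamma(\partial/\partial t)}=\nabla^N_{\gamma'}$, this becomes $\overline{\triangle}=-\nabla^N_{\gamma'}\nabla^N_{\gamma'}$ on sections along $\gamma$. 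Iterating gives
\[
\overline{\triangle}^{(k-2)}\tau(\gamma)=(-1)^{k-2}\bigl(\nabla^N_{\gamma'}\nabla^N_{\gamma'}\bigr)^{k-2}\tau(\gamma),\qquad
\overline{\triangle}\bigl(\overline{\triangle}^{(k-2)}\tau(\gamma)\bigr)=(-1)^{k-1}\bigl(\nabla^N_{\gamma'}\nabla^N_{\gamma'}\bigr)^{k-1}\tau(\gamma).
\]

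Next I would evaluate the curvature operator $\mathscr{R}$. With $m=1$ the defining sum collapses to the single term $\mathscr{R}(V)=R^N(V,\gamma')\gamma'$. Substituting the constant sectional curvature expression for $R^N$ and using $\langle\gamma',\gamma'\rangle=1$ reduces this to $\mathscr{R}(V)=K\bigl(V-\langle\gamma',V\rangle\gamma'\bigr)$, up to the sign fixed by the curvature convention. Taking $V=\overline{\triangle}^{(k-2)}\tau(\gamma)=(-1)^{k-2}\bigl(\nabla^N_{\gamma'}\nabla^N_{\gamma'}\bigr)^{k-2}\tau(\gamma)$ then expresses the curvature contribution with the same overall factor $(-1)^{k-2}$.

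Finally I would substitute both pieces into \eqref{propkharmonic} and factor out the common power of $-1$, after which the displayed equation of the Proposition drops out. The sole delicate point is the bookkeeping of signs: the factor $(-1)$ produced by each rough Laplacian, the sign of $R^N$ fixed by the sectional-curvature convention, and the freedom to multiply the vanishing condition $\tau_k(\gamma)=0$ by a nonzero constant must all be reconciled so that the common factor cancels and the equation holds with a single uniform sign for every parity of $k$. I expect this sign tracking, rather than any genuine analytic difficulty, to be the main and essentially the only obstacle.
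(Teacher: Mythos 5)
Your strategy is exactly the paper's own proof: on an arc-length curve the rough Laplacian reduces to $\olapla=-\nabla^N_{\gamma'}\nabla^N_{\gamma'}$ (the term $\onabla_{\nabla_{e_1}e_1}$ vanishes), the curvature operator reduces to $\mathscr{R}(V)=K\left(V-\langle\gamma',V\rangle\gamma'\right)$, and one substitutes both into \eqref{propkharmonic}. Both of these computations are right. The problem is precisely the step you postpone: the sign bookkeeping is not a formality that can be ``reconciled'' to the displayed equation, because carrying it out changes the sign of the curvature term. Writing $D=\nabla^N_{\gamma'}\nabla^N_{\gamma'}$ for brevity, you have $\olapla^{k-1}\tau(\gamma)=(-1)^{k-1}D^{k-1}\tau(\gamma)$ while $\mathscr{R}\left(\olapla^{k-2}\tau(\gamma)\right)=(-1)^{k-2}K\left\{D^{k-2}\tau(\gamma)-\langle\gamma',D^{k-2}\tau(\gamma)\rangle\gamma'\right\}$. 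The two prefactors have \emph{opposite} parity, so
\begin{align*}
\tau_k(\gamma)
=(-1)^{k-1}\left[D^{k-1}\tau(\gamma)
+K\left\{D^{k-2}\tau(\gamma)-\langle\gamma',D^{k-2}\tau(\gamma)\rangle\gamma'\right\}\right],
\end{align*}
and hence $\tau_k(\gamma)=0$ is equivalent to the equation with $+K$, not with $-K$ as printed in Proposition \ref{k-harmonic c s}. Multiplying the vanishing condition by a nonzero constant cannot repair this, since the discrepancy is a \emph{relative} sign between the two terms, not an overall one.

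What you have uncovered is in fact a misprint in the paper's statement rather than a flaw in your computation: the paper's proof displays exactly your two identities and then asserts the $-K$ equation without justification, while every subsequent use of the Proposition employs the $+K$ version. Indeed, the proof of Proposition \ref{biharmonic c s} concludes $\kappa_1^2+\kappa_2^2=K$, which requires the $+K$ form (with $-K$ one would get $\kappa_1^2+\kappa_2^2=-K$, contradicting the classification of proper biharmonic curves in spheres), the system in Proposition \ref{3-harmonic c s} carries the terms $+K\{\cdots\}$, and the proof of Theorem \ref{2-harmonic is k-harmonic} explicitly verifies the combination $(\nabla^N_{\gamma'}\nabla^N_{\gamma'})^{k-1}\tau(\gamma)+K\{\cdots\}=0$. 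So finish the sign computation and state your conclusion with $+K$; if you instead force the signs to land on the printed equation, you will have introduced an error rather than removed one.
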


\vspace{10pt}

\begin{proof}
\begin{align*}
\olapla\tau(\gamma)&=(-1)^{k-1}(\nabla^N_{\gamma'}\nabla^N_{\gamma'})^{k-1}\tau(\gamma),\\
\mathscr{R}(\olapla^{k-2}\tau(\gamma))
&=K\{(-1)^{k-2}(\nabla^N_{\gamma'}\nabla^N_{\gamma'})^{k-2}\tau(\gamma)
-\langle\gamma',(-1)^{k-2}(\nabla^N_{\gamma'}\nabla^N_{\gamma'})^{k-2}\tau(\gamma)\rangle\gamma'\}.
\end{align*}
Therefore, we have Proposition $\ref{k-harmonic c s}$.
\end{proof}

\vspace{10pt}

Using Proposition $\ref{k-harmonic c s}$, we show the necessary and sufficient condition of biharmonic curve
 and $3$-harmonic curve, respectively.

\begin{prop}\label{biharmonic c s}
Let $\gamma:I\rightarrow (N^n,\langle \cdot, \cdot \rangle)$ be a smooth curve parametrized by arc length 
 from an open interval of $\mathbb{R}$ into a Riemannian manifold $(N^n,\langle \cdot, \cdot \rangle)$ with constant sectional curvature $K$.
 Then, $\gamma $ is proper biharmonic if and only if,
\begin{align}
\begin{cases}
\kappa_1^2+\kappa_2^2=K,\\
\kappa_1=constant\neq 0,\\
\kappa_2=constant,\\
\kappa_2\kappa_3=0.
\end{cases}
\end{align}
\end{prop}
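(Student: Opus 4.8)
The plan is to specialize the general criterion of Proposition \ref{k-harmonic c s} to the case $k=2$, which reduces the biharmonicity of $\gamma$ to the single vector equation
\begin{align*}
\nabla^N_{\gamma'}\nabla^N_{\gamma'}\tau(\gamma)-K\{\tau(\gamma)-\langle\gamma',\tau(\gamma)\rangle\gamma'\}=0.
\end{align*}
The first move is to compute the tension field in terms of the Frenet data. Since $\gamma$ is parametrized by arc length, the domain connection satisfies $\nabla_{e_1}e_1=0$, so the definition of the tension field gives $\tau(\gamma)=\nabla^N_{\gamma'}\gamma'=\nabla^N_{\gamma'}e_1=\kappa_1 e_2$ by the first Frenet equation. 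Because $e_1\perp e_2$, the tangential part drops out: $\langle\gamma',\tau(\gamma)\rangle=\kappa_1\langle e_1,e_2\rangle=0$. Hence the biharmonic equation collapses to $\nabla^N_{\gamma'}\nabla^N_{\gamma'}(\kappa_1 e_2)-K\kappa_1 e_2=0$.

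The heart of the argument is then to expand $\nabla^N_{\gamma'}\nabla^N_{\gamma'}(\kappa_1 e_2)$ in the orthonormal Frenet frame by applying the Frenet equations twice. Differentiating once yields $\nabla^N_{\gamma'}(\kappa_1 e_2)=-\kappa_1^2 e_1+\kappa_1' e_2+\kappa_1\kappa_2 e_3$, and differentiating again (using $\nabla^N_{\gamma'}e_3=-\kappa_2 e_2+\kappa_3 e_4$) gives
\begin{align*}
\nabla^N_{\gamma'}\nabla^N_{\gamma'}(\kappa_1 e_2)
=-3\kappa_1\kappa_1'\,e_1+(\kappa_1''-\kappa_1^3-\kappa_1\kappa_2^2)\,e_2+(2\kappa_1'\kappa_2+\kappa_1\kappa_2')\,e_3+\kappa_1\kappa_2\kappa_3\,e_4.
\end{align*}
Subtracting $K\kappa_1 e_2$ and invoking the linear independence of $e_1,e_2,e_3,e_4$, the equation splits into four scalar conditions along the respective frame directions.

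Finally I would extract the stated system by imposing properness. Along $e_1$ one gets $\kappa_1\kappa_1'=0$; since properness forces $\tau(\gamma)=\kappa_1 e_2\neq 0$, i.e. $\kappa_1\neq 0$, this yields $\kappa_1'=0$, so $\kappa_1$ is a nonzero constant. Feeding $\kappa_1'=\kappa_1''=0$ into the $e_2$-component produces $\kappa_1(\kappa_1^2+\kappa_2^2-K)=0$, hence $\kappa_1^2+\kappa_2^2=K$; the $e_3$-component becomes $\kappa_1\kappa_2'=0$, giving $\kappa_2$ constant; and the $e_4$-component $\kappa_1\kappa_2\kappa_3=0$ reduces to $\kappa_2\kappa_3=0$. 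The converse is the same computation read backwards. The main point requiring care is the bookkeeping in the second Frenet differentiation—correctly collecting the $e_1,\dots,e_4$ coefficients and tracking the sign conventions in the curvature term of Proposition \ref{k-harmonic c s} so that the $e_2$-equation delivers precisely $\kappa_1^2+\kappa_2^2=K$.
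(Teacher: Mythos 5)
Your strategy coincides with the paper's: compute $\tau(\gamma)=\kappa_1 e_2$, expand $(\nabla^N_{\gamma'}\nabla^N_{\gamma'})(\kappa_1 e_2)$ in the Frenet frame --- your expansion agrees exactly with the paper's formula $(\ref{gamma2})$ --- and split the resulting vector equation into components. However, there is a genuine sign error at the decisive step. You start from Proposition \ref{k-harmonic c s} taken verbatim, so your biharmonic equation reads $\nabla^N_{\gamma'}\nabla^N_{\gamma'}(\kappa_1 e_2)-K\kappa_1 e_2=0$, whose $e_2$-component is $\kappa_1''-\kappa_1^3-\kappa_1\kappa_2^2-K\kappa_1=0$. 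For constant $\kappa_1\neq 0$ this factors as $-\kappa_1(\kappa_1^2+\kappa_2^2+K)=0$, i.e.\ $\kappa_1^2+\kappa_2^2=-K$. Your claim that it ``produces $\kappa_1(\kappa_1^2+\kappa_2^2-K)=0$'' is not correct algebra, and as written your argument yields a condition incompatible with the proposition being proved.

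The root cause is that Proposition \ref{k-harmonic c s} as printed carries a sign typo, and you inherited it. For a curve parametrized by arc length one has $\olapla=-\nabla^N_{\gamma'}\nabla^N_{\gamma'}$, and in constant curvature $K$ one has $\mathscr{R}(V)=K\{V-\langle\gamma',V\rangle\gamma'\}$; hence for $k=2$,
\begin{align*}
\tau_2(\gamma)=\olapla\tau(\gamma)-\mathscr{R}(\tau(\gamma))
=-\bigl[\nabla^N_{\gamma'}\nabla^N_{\gamma'}\tau(\gamma)+K\{\tau(\gamma)-\langle\gamma',\tau(\gamma)\rangle\gamma'\}\bigr],
\end{align*}
so biharmonicity is equivalent to $\nabla^N_{\gamma'}\nabla^N_{\gamma'}\tau(\gamma)+K\{\tau(\gamma)-\langle\gamma',\tau(\gamma)\rangle\gamma'\}=0$, with $+K$ rather than $-K$ (the same factorization of $(-1)^{k-1}$ shows the general criterion also carries $+K$; the paper itself uses the $+K$ form in the proofs of Proposition \ref{3-harmonic c s} and Theorem \ref{2-harmonic is k-harmonic}). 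With the corrected sign your Frenet computation goes through verbatim: the $e_2$-equation becomes $\kappa_1''-\kappa_1^3-\kappa_1\kappa_2^2+K\kappa_1=0$, which for constant nonzero $\kappa_1$ gives exactly $\kappa_1^2+\kappa_2^2=K$. So the repair is to rederive the $k=2$ criterion from the definition of $\tau_2$ (or to correct the sign in Proposition \ref{k-harmonic c s}) rather than to force the factorization; this is precisely the sign bookkeeping your last sentence warns about. A minor secondary point: properness gives only $\kappa_1\not\equiv 0$, so to pass from $\kappa_1\kappa_1'=0$ to ``$\kappa_1$ is a nonzero constant'' note that $(\kappa_1^2)'=0$ everywhere, hence $\kappa_1^2$ is a positive constant.
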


\vspace{3pt}

\begin{proof}
$\tau(\gamma)=\kappa_1e_2$.
So we caluculate $(\nabla_{\gamma'}^N\nabla_{\gamma'}^N)(\kappa_1e_2)$ as follows.
\begin{align}
(&\nabla_{\gamma'}^N\nabla_{\gamma'}^N)(\kappa_1e_2) \label{gamma2}\\
&=-3\kappa_1\kappa_1'e_1+(\kappa_1''-\kappa_1^3-\kappa_1\kappa_2^2)e_2 \notag 
+(2\kappa_1'\kappa_2+\kappa_1\kappa_2')e_3+\kappa_1\kappa_2\kappa_3e_4.
\end{align}
Using Proposition $\ref{k-harmonic c s}$, and $\kappa_1\neq 0$, we have Proposition $\ref{biharmonic c s}.$
\end{proof}


\begin{prop}\label{3-harmonic c s}
Let $\gamma:I\rightarrow (N^n,\langle \cdot, \cdot \rangle)$ be a smooth curve parametrized by arc length 
 from an open interval of $\mathbb{R}$ into a Riemannian manifold $(N^n,\langle \cdot, \cdot \rangle)$ with constant sectional curvature $K$.
 Then, $\gamma $ is $3$-harmonic if and only if,
\begin{align}
\begin{cases}
-2\kappa'_1\kappa''_1-\kappa_1\kappa_1^{(3)}+2\kappa^3_1\kappa_1'+\kappa_1\kappa_1'\kappa_2^2+\kappa_1^2\kappa_2\kappa_2'=0\\
-15\kappa_1(\kappa_1')^2- 10\kappa_1^2\kappa_1''+ \kappa_1^5+ 2\kappa_1^3\kappa_2^2+\kappa_1^{(4)}
-6\kappa_1''\kappa_2^2 -12\kappa_1'\kappa_2\kappa_2'\\
-3\kappa_1(\kappa_2')^2 -4\kappa_1\kappa_2\kappa_2''
+\kappa_1\kappa_2^4 +\kappa_1\kappa_2^2\kappa_3^2
+K\{\kappa_1''-\kappa_1^3-\kappa_1\kappa_2^2\}=0\\
4\kappa_1^{(3)}\kappa_2
-9\kappa_1^2\kappa_1'\kappa_2 -4\kappa_1'\kappa_2^3 -6\kappa_1\kappa_2^2\kappa_2' +6\kappa_1''\kappa_2'-\kappa_1^3\kappa_2'\\
+4\kappa_1'\kappa_2''+\kappa_1\kappa_2^{(3)}-4\kappa_1'\kappa_2\kappa_3^2
-3\kappa_1\kappa_2'\kappa_3^2-3\kappa_1\kappa_2\kappa_3\kappa_3'
+K\{2\kappa_1'\kappa_2+\kappa_1\kappa_2'\}=0\\
6\kappa_1''\kappa_2\kappa_3 -\kappa_1^3\kappa_2\kappa_3 -\kappa_1\kappa_2^3\kappa_3 
+8\kappa_1'\kappa_2'\kappa_3 +3\kappa_1\kappa_2''\kappa_3
-\kappa_1\kappa_2\kappa_3^3\\
+4\kappa_1'\kappa_2\kappa_3' +3\kappa_1\kappa_2'\kappa_3' +\kappa_1\kappa_2\kappa_3'' -\kappa_1\kappa_2\kappa_3\kappa_4^2
+K\{\kappa_1\kappa_2\kappa_3\}=0\\
4\kappa_1'\kappa_2\kappa_3\kappa_4 +3\kappa_1\kappa_2'\kappa_3\kappa_4
+2\kappa_1\kappa_2\kappa_3'\kappa_4+\kappa_1\kappa_2\kappa_3\kappa_4'=0\\
\kappa_1\kappa_2\kappa_3\kappa_4\kappa_5=0
\end{cases}
\end{align}
\end{prop}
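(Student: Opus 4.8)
The plan is to specialise the general criterion of Proposition \ref{k-harmonic c s} to $k=3$ and to read off the six scalar equations by projecting onto the Frenet frame. Since $\tau(\gamma)=\kappa_1e_2$, the whole computation reduces to expressing $(\nabla^N_{\gamma'}\nabla^N_{\gamma'})^2(\kappa_1e_2)$ and $(\nabla^N_{\gamma'}\nabla^N_{\gamma'})(\kappa_1e_2)$ in the basis $e_1,\dotsm,e_6$; the single application is already recorded in $(\ref{gamma2})$, so the only new work is one further application of $(\nabla^N_{\gamma'}\nabla^N_{\gamma'})$.

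First I would abbreviate $(\ref{gamma2})$ as $(\nabla^N_{\gamma'}\nabla^N_{\gamma'})(\kappa_1e_2)=Ae_1+Be_2+Ce_3+De_4$, where $A=-3\kappa_1\kappa_1'$, $B=\kappa_1''-\kappa_1^3-\kappa_1\kappa_2^2$, $C=2\kappa_1'\kappa_2+\kappa_1\kappa_2'$ and $D=\kappa_1\kappa_2\kappa_3$, and then apply $\nabla^N_{\gamma'}$ twice more. At each step one differentiates the scalar coefficients and substitutes the Frenet relations $\nabla^N_{\gamma'}e_1=\kappa_1e_2$ and $\nabla^N_{\gamma'}e_i=-\kappa_{i-1}e_{i-1}+\kappa_ie_{i+1}$. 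Because every application raises the top index by one, $(\nabla^N_{\gamma'}\nabla^N_{\gamma'})^2(\kappa_1e_2)$ is a combination of $e_1,\dotsm,e_6$ with six explicit coefficients, and these six coefficients are precisely the six left-hand sides of the stated system.

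Next I would insert the curvature correction. As $\gamma'=e_1$, the scalar $\langle\gamma',(\nabla^N_{\gamma'}\nabla^N_{\gamma'})(\kappa_1e_2)\rangle$ is just the coefficient $A$ from $(\ref{gamma2})$, so the bracket $(\nabla^N_{\gamma'}\nabla^N_{\gamma'})\tau(\gamma)-\langle\gamma',(\nabla^N_{\gamma'}\nabla^N_{\gamma'})\tau(\gamma)\rangle\gamma'$ loses its $e_1$-part and equals $Be_2+Ce_3+De_4$. Consequently the curvature $K$ enters only the $e_2$-, $e_3$- and $e_4$-equations, through the respective multiples of $B$, $C$ and $D$, while the $e_1$-, $e_5$- and $e_6$-equations are purely the corresponding coefficients of $(\nabla^N_{\gamma'}\nabla^N_{\gamma'})^2(\kappa_1e_2)$. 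Since $e_1,\dotsm,e_6$ are orthonormal, the vanishing of this vector field is equivalent to the simultaneous vanishing of all six coefficients, which yields both implications of the proposition at once.

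The main obstacle is purely computational: the two further applications of $\nabla^N_{\gamma'}$ to a four- and then a five-term expression produce a large number of monomials in $\kappa_1,\kappa_2,\kappa_3,\kappa_4,\kappa_5$ and their derivatives, and collecting them correctly by frame vector is where sign and bookkeeping errors are most likely to enter. A convenient check is the $e_6$-coefficient, which collapses to $\kappa_1\kappa_2\kappa_3\kappa_4\kappa_5$ and reproduces the last equation immediately, together with the highest-order term $\kappa_1^{(4)}$ in the $e_2$-equation; matching each remaining coefficient against the corresponding line of the system then completes the proof.
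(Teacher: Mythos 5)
Your proposal is correct and takes essentially the same route as the paper: the paper's proof likewise just expands $(\nabla^N_{\gamma'}\nabla^N_{\gamma'})^2\tau(\gamma)$ in the Frenet frame and combines it with $(\ref{gamma2})$ via Proposition \ref{k-harmonic c s}, the vanishing of the six orthonormal components giving the six equations. One subtlety affecting you and the paper equally: since $\overline\bigtriangleup=-\nabla^N_{\gamma'}\nabla^N_{\gamma'}$ for an arc-length curve, the correct specialization of the $k$-harmonic equation carries $+K\{\cdot\}$ (as in the stated system and in the paper's proofs of Proposition \ref{biharmonic c s} and Theorem \ref{2-harmonic is k-harmonic}), not the $-K\{\cdot\}$ literally printed in Proposition \ref{k-harmonic c s}, so you should quote that criterion in the corrected form when assembling the curvature terms.
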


\vspace{5pt}

\begin{proof}
We caluculate $(\nabla_{\gamma'}^N\nabla_{\gamma'}^N)^2\tau(\gamma)$ as follows.
\begin{align*}
(\nabla_{\gamma'}^N&\nabla_{\gamma'}^N)^2\tau(\gamma)\\
&\hspace{-5pt}=(-10\kappa'_1\kappa''_1-5\kappa_1\kappa_1^{(3)}+10\kappa^3_1\kappa_1'+5\kappa_1\kappa_1'\kappa_2^2+5\kappa_1^2\kappa_2\kappa_2')e_1\\
&+(-15\kappa_1(\kappa_1')^2 -10\kappa_1^2\kappa_1''+\kappa_1^5 +2\kappa_1^3\kappa_2^2+\kappa_1^{(4)}\\
&\ \ -6\kappa_1''\kappa_2^2 -12\kappa_1'\kappa_2\kappa_2' -3\kappa_1(\kappa_2')^2 -4\kappa_1\kappa_2\kappa_2''
+\kappa_1\kappa_2^4 +\kappa_1\kappa_2^2\kappa_3^2)e_2\\
&+(4\kappa_1^{(3)}\kappa_2-9\kappa_1^2\kappa_1'\kappa_2
-4\kappa_1'\kappa_2^3-6\kappa_1\kappa_2^2\kappa_2'\\
&\ \ +6\kappa_1''\kappa_2'-\kappa_1^3\kappa_2'
+4\kappa_1'\kappa_2''+\kappa_1\kappa_2^{(3)}-4\kappa_1'\kappa_2\kappa_3^2\\
&\ \ -3\kappa_1\kappa_2'\kappa_3^2-3\kappa_1\kappa_2\kappa_3\kappa_3')e_3\\
&+(6\kappa_1''\kappa_2\kappa_3-\kappa_1^3\kappa_2\kappa_3-\kappa_1\kappa_2^3\kappa_3 +8\kappa_1'\kappa_2'\kappa_3\\
&\ \ +3\kappa_1\kappa_2''\kappa_3-\kappa_1\kappa_2\kappa_3^3+4\kappa_1'\kappa_2\kappa_3'\\
&\ \ +3\kappa_1\kappa_2'\kappa_3'+\kappa_1\kappa_2\kappa_3''-\kappa_1\kappa_2\kappa_3\kappa_4^2)e_4\\
&+(4\kappa_1'\kappa_2\kappa_3\kappa_4 +3\kappa_1\kappa_2'\kappa_3\kappa_4
+2\kappa_1\kappa_2\kappa_3'\kappa_4+\kappa_1\kappa_2\kappa_3\kappa_4')e_5\\
&+\kappa_1\kappa_2\kappa_3\kappa_4\kappa_5e_6
\end{align*}
 Using Proposition $\ref{k-harmonic c s}$, and $(\ref{gamma2})$,
we have Proposition $\ref{3-harmonic c s}$. 
\end{proof}

\vspace{10pt}

We showed biharmonic curve is $k$-harmonic curve into 2-dimensional unit sphere \cite{sm1}.
 We generalize this as following.
 
\begin{thm}\label{2-harmonic is k-harmonic}
Let $\gamma:I\rightarrow (N^n,\langle \cdot, \cdot \rangle)$ be a smooth curve parametrized by arc length 
 from an open interval of $\mathbb{R}$ into a Riemannian manifold $(N^n,\langle \cdot, \cdot \rangle)$ with constant sectional curvature $K$.
 Then, biharmonic is $k$-harmonic $(k\geq 2)$.
\end{thm}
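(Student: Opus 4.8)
The plan is to exploit the fact that, for a curve in a space of constant sectional curvature $K$, the tension field $\tau(\gamma)=\kappa_1 e_2$ is a common eigenvector of the two operators occurring in the $k$-harmonic equation $(\ref{propkharmonic})$, both with eigenvalue $K$. A harmonic curve is a geodesic, so $\tau(\gamma)=0$ and hence $\olapla^{(k-2)}\tau(\gamma)=0$ forces $\tau_k(\gamma)=0$ trivially; it therefore suffices to treat a proper biharmonic $\gamma$, for which $\tau(\gamma)=\kappa_1 e_2$ with $\kappa_1\neq 0$.

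First I would record how $\mathscr{R}$ acts. Since the domain is one-dimensional and $|\gamma'|=1$, for any $V\in\Gamma(\gamma^{-1}TN)$ one has
\[\mathscr{R}(V)=R^N(V,\gamma')\gamma'=K\big(V-\langle V,\gamma'\rangle\gamma'\big).\]
As $\tau(\gamma)=\kappa_1 e_2$ is orthogonal to $\gamma'=e_1$, this gives $\mathscr{R}(\tau(\gamma))=K\tau(\gamma)$. The biharmonic equation $\tau_2(\gamma)=\olapla\tau(\gamma)-\mathscr{R}(\tau(\gamma))=0$ then becomes
\[\olapla\tau(\gamma)=K\tau(\gamma);\]
the same identity can alternatively be read off from $(\ref{gamma2})$ using $\kappa_1'=\kappa_2'=0$, $\kappa_2\kappa_3=0$ and $\kappa_1^2+\kappa_2^2=K$ of Proposition $\ref{biharmonic c s}$.

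The final step is a telescoping induction. Because $K$ is constant, iterating $\olapla\tau(\gamma)=K\tau(\gamma)$ yields $\olapla^{(j)}\tau(\gamma)=K^{j}\tau(\gamma)$ for every $j\geq 0$; in particular each $\olapla^{(j)}\tau(\gamma)$ stays a multiple of $e_2$, hence orthogonal to $\gamma'$. Applying the formula for $\mathscr{R}$ to $V=\olapla^{(k-2)}\tau(\gamma)=K^{k-2}\tau(\gamma)$ gives $\mathscr{R}(\olapla^{(k-2)}\tau(\gamma))=K^{k-1}\tau(\gamma)$, while $\olapla^{(k-1)}\tau(\gamma)=K^{k-1}\tau(\gamma)$. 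Substituting both into $\tau_k(\gamma)=\olapla^{(k-1)}\tau(\gamma)-\mathscr{R}(\olapla^{(k-2)}\tau(\gamma))$ makes the two terms cancel, so $\tau_k(\gamma)=0$ and $\gamma$ is $k$-harmonic.

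Once the eigenvector structure is in place the computation is short; the point that needs care---and that I regard as the crux---is verifying that the iterated rough Laplacians $\olapla^{(j)}\tau(\gamma)$ remain orthogonal to $\gamma'$, since this is exactly what lets $\mathscr{R}$ act as multiplication by $K$ at every stage and makes the $k$-harmonic equation telescope. Here that orthogonality follows at once from $\olapla\tau(\gamma)=K\tau(\gamma)$, and it is this structural feature that propagates biharmonicity to all $k\geq 2$.
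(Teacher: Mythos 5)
Your proof is correct, and it takes a genuinely more streamlined route than the paper's. Both arguments rest on the same cancellation: once one knows that the iterated Laplacians of $\tau(\gamma)$ are powers of $K$ times $\tau(\gamma)$, the two terms of $\tau_k(\gamma)=\olapla^{(k-1)}\tau(\gamma)-\mathscr{R}\bigl(\olapla^{(k-2)}\tau(\gamma)\bigr)$ kill each other. The difference is how that eigen-identity is obtained. The paper first invokes the full ODE characterization of proper biharmonic curves (Proposition \ref{biharmonic c s}: $\kappa_1,\kappa_2$ constant, $\kappa_2\kappa_3=0$, $\kappa_1^2+\kappa_2^2=K$) and then iterates the Frenet-frame formula (\ref{gamma2}) to compute $(\nabla^N_{\gamma'}\nabla^N_{\gamma'})^k\tau(\gamma)=(-1)^k\kappa_1K^ke_2$ explicitly. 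You instead read the biharmonic equation itself as the operator identity $\olapla\tau(\gamma)=\mathscr{R}(\tau(\gamma))=K\tau(\gamma)$, using only $\mathscr{R}(V)=K\bigl(V-\langle V,\gamma'\rangle\gamma'\bigr)$ and $\tau(\gamma)\perp\gamma'$, and then iterate, checking (correctly, and this is indeed the crux) that each iterate remains a multiple of $e_2$, hence orthogonal to $\gamma'$, so that $\mathscr{R}$ keeps acting as multiplication by $K$ at every stage. This bypasses Proposition \ref{biharmonic c s} and the computation (\ref{gamma2}) altogether. A further benefit: working directly with $\olapla$ makes your argument immune to the sign discrepancy in the paper, where Proposition \ref{k-harmonic c s} is printed with $-K\{\cdots\}$ while the correct criterion (the one the paper's own proof of this theorem actually uses) carries $+K\{\cdots\}$, the discrepancy coming from the factors $(-1)^{j}$ in $\olapla^{(j)}=(-1)^j(\nabla^N_{\gamma'}\nabla^N_{\gamma'})^{j}$. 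What the paper's route buys is the explicit Frenet formulas, which it needs anyway for Propositions \ref{biharmonic c s} and \ref{3-harmonic c s}; what your route buys is brevity and a clearer view of why the theorem is true: biharmonicity makes $\tau(\gamma)$ an eigenvector of $\olapla$ with eigenvalue $K$, and that single structural fact propagates to every $k\geq 2$.
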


\begin{proof}
By Proposition $\ref{biharmonic c s}$, $\gamma$ is proper biharmonic if and only if
\begin{align*}
\begin{cases}
\kappa_1^2+\kappa_2^2=K,\\
\kappa_1=constant\neq 0,\\
\kappa_2=constant,\\
\kappa_2\kappa_3=0.
\end{cases}
\end{align*}
 Then, we caluculate $(\nabla_{\gamma'}^N\nabla_{\gamma'}^N)^k\tau(\gamma)$.
\begin{align*}
(\nabla_{\gamma'}^N\nabla_{\gamma'}^N)^k\tau(\gamma)
&=(-1)^k\kappa_1(\kappa_1^2+\kappa_2^2)^ke_2\\
&=(-1)^k\kappa_1K^ke_2.
\end{align*}
So, we have
\begin{align*}
&\hspace{-5pt}(\nabla_{\gamma'}^N\nabla_{\gamma'}^N)^{k-1}\tau(\gamma)
+K\{(\nabla_{\gamma'}^N\nabla_{\gamma'}^N)^{k-2}\tau(\gamma)-\langle \gamma', (\nabla_{\gamma'}^N\nabla_{\gamma'}^N)^{k-2}\tau(\gamma)\rangle\gamma'\}\\
&=(-1)^{k-1}\kappa_1K^{k-1}e_2+K(-1)^{k-2}\kappa_1K^{k-2}e_2\\
&=0.
\end{align*}
And harmonic is always $k$-harmonic.
 So we have Theorem $\ref{2-harmonic is k-harmonic}$.

\end{proof}



\end{document}